\documentclass[12]{amsart}
\usepackage{titletoc}
\usepackage{blindtext}
\usepackage{amsfonts,amssymb,amsmath,amsthm}

\newtheorem{innercustomgeneric}{\customgenericname}
\providecommand{\customgenericname}{}
\newcommand{\newcustomtheorem}[2]{%
  \newenvironment{#1}[1]
  {%
   \renewcommand\customgenericname{#2}%
   \renewcommand\theinnercustomgeneric{##1}%
   \innercustomgeneric
  }
  {\endinnercustomgeneric}
}

\newcustomtheorem{customthm}{Theorem}
\newcustomtheorem{customlemma}{Lemma}

\usepackage{lineno}

\usepackage{graphicx}

\usepackage{float, tabularx, booktabs, caption}

\usepackage{epsfig}

\usepackage{graphicx, color}

\makeatletter
\@namedef{subjclassname@2020}{\textup{2020} Mathematics Subject Classification}
\makeatother

\makeatletter

\newtheorem*{thrm}{Theorem}
\newtheorem{lem}{Lemma}

\numberwithin{equation}{section}

\begin{document}
\title[Growth of periodic orbits for star flows]{At most exponential growth of periodic orbits for star flows}

\author[C.A. Morales]{C.A. Morales}
\address{Hangzhou International Innovation Institute of Beihang University,  
Hangzhou 311115, China.}
\email{tchivatze@gmail.com}

\author{E. Rego}
\address{AGH University of Science and Technology
Krakow, Poland.}
\email{rego@agh.edu.pl}

\author{X. Wen}
\address{Key Laboratory of Mathematics, Informatics and Behavioral Semantics,  Institute of Artificial Intelligence $\&$ School of Mathematical Sciences,
Beihang University,
Beijing 1000191, Peoples Republic of China.}
\email{wenxiao@buaa.edu.cn}

\keywords{Star flow, Periodic orbits, Growth rate}
\subjclass[2020]{Primary 37C10; Secondary 37B05}

\begin{abstract}
We prove that on any closed manifold there exists a $C^1$
open and dense subset of star flows for which the periodic orbits grow at most exponentially. We also explain how our methods apply to the sectional-hyperbolic attractors including the Lorenz polynomial equation.
\end{abstract}

\maketitle




\section{Introduction}
\label{sec1}

\noindent
The growth of periodic orbits has been extensively studied over the past decades. In fact, Artin and Mazur \cite{am} proved that it is at most exponential for a $C^k$-dense subset of $C^k$ endomorphisms of compact manifolds, for $k \geq 1$. These authors also conjectured that this property extends to diffeomorphisms and vector fields. Meyer \cite{m} established at most exponential growth for $C^2$ diffeomorphisms and vector fields with hyperbolic nonwandering sets, and Shub later reduced this conclusion to the $C^1$ setting. Kaloshin \cite{ka} settled the Artin--Mazur conjecture for diffeomorphisms, while the vector field case remains open, to the best of our knowledge. Nevertheless, several related results are known for flows and semiflows: at most exponential growth holds for expansive flows \cite{bw}, $N$-expansive flows \cite{lms}, and eventually expansive semiflows \cite{lr}. On the other hand, a recent result of Wu et al.~\cite{wyz} shows that, for $C^1$-generic vector fields on closed manifolds, the growth of periodic orbits is at least exponential. Further results about the growth of periodic orbits can be found in \cite{szz}.

In this paper, we study this growth for {\em star flows}, that is, $C^1$ vector fields on a closed manifold that cannot be $C^1$-approximated by vector fields exhibiting nonhyperbolic closed orbits. Such vector fields play a fundamental role in the modern theory of dynamical systems; see, for instance, \cite{bdl,gw}. Our main result shows that there exists a $C^1$ open and dense subset of star flows for which the growth of periodic orbits is at most exponential. Thus, while generic vector fields exhibit at least exponential growth, within the class of star flows the generic behavior is at most exponential.
We also explain how our methods apply to prove at most exponential growth of periodic orbits for the sectional-hyperbolic attractors \cite{mm} including the Lorenz polynomial equation with classical parameters \cite{l}.
We now state our result precisely.

Consider a {\em closed} (i.e. compact connected boundaryless Riemannian) manifold $M$.
Note that the set of $C^1$ vector fields of $M$ is a Banach space. Fix such a vector field $X$ and denote by $\varphi_t$ the flow induced by $X$.
A point $x\in M$ is {\em periodic} if there is a minimal $\tau=\tau(x)$ (called period) such that $\varphi_\tau(x)=x$.
A {\em periodic orbit} is the orbit $\{\phi_t(x)\mid t\in\mathbb{R}\}$ of a periodic point $x$, and its period is that of $x$.
Denote by $N_t(X)$ the number of periodic orbits of $X$ with period less than or equal to $t$.
We say that $N_t(X)$ {\em grows at most exponentially} if there is $c=c(X)<\infty$ such that
$$
N_t(X)\leq c^t,\qquad\forall t>0.
$$
Our result is the following:

\begin{thrm}
There is a $C^1$ open and dense $\mathcal{O}$ subset of star flows of $M$ such that if $X\in \mathcal{O}$, then $N_t(X)$ grows at most exponentially.
\end{thrm}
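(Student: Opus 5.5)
Our plan is to combine the known structure theory of star flows with an expansiveness-type counting argument. We use the results of Gan--Wen, Li--Gan--Wen and Shi--Gan--Wen, together with the multisingular-hyperbolicity framework of Bonatti--da Luz (\cite{bdl,gw}). These give the following: for $C^1$-generic star flows, the chain recurrent set splits into finitely many chain recurrence classes. Each class is either hyperbolic or (multi)singular hyperbolic, meaning sectional-hyperbolic for $X$ or $-X$ in the Lorenz-like case. Moreover, there is a $C^1$ open and dense set of star flows on which this finiteness and singular-hyperbolic structure is robust, at least on a neighborhood $U$ of the chain recurrent set. We take $\mathcal{O}$ to be this open and dense set, intersected with the open and dense set where all singularities are hyperbolic. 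Since every periodic orbit lies in the chain recurrent set, hence in one of finitely many classes $C_1,\dots,C_k$, it suffices to show that each class satisfies $N_t(X|_{C_i})\le c_i^t$ and then take $c=\max c_i$ (up to a factor $k$, which can be absorbed by enlarging $c$).

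For the hyperbolic classes the bound is classical: Bowen's expansiveness and the shadowing lemma give at most exponential growth, as in \cite{m} or via \cite{bw}. For a singular-hyperbolic class $C$ (say sectional-hyperbolic for $X$), the key step is a uniform separation of periodic orbits. We will show that there exist $\epsilon>0$ and $T_0>0$ with the following property. If two distinct periodic orbits $\gamma,\gamma'\subset C$ have periods $\le t$, then for every reparametrization they are $\epsilon$-separated at some time in $[0,t]$, measured along the flow with a time-change that is $\epsilon$-close to the identity. This is the ``rescaled'' expansiveness of Wen--Wen and Shi--Gan--Wen for multisingular hyperbolic sets: it uses the linear Poincaré flow rescaled by $|X|$, and it holds even though the orbits may pass near singularities. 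Granting this, we cover $M$ by finitely many balls of radius $\epsilon/2$ and use the standard Bowen-ball counting. Discretize time with step $\delta$, where $\delta$ is chosen from the uniform continuity of $\varphi$ so that orbits stay within a ball over intervals of length $\delta$. Each periodic orbit of period $\le t$ determines an itinerary word of length $\lceil t/\delta\rceil$ over a fixed finite alphabet of $L$ balls. Separation implies that distinct orbits give distinct words (modulo the choice of starting point, which contributes at most a polynomial factor). Hence $N_t\le \mathrm{poly}(t)\,L^{t/\delta}\le c^t$.

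We expect the main obstacle to be the separation step near singularities. Close to a singularity the flow speed degenerates, so standard expansiveness fails, and two periodic orbits can remain close for a long time while passing a Lorenz-like singularity with different time parametrizations. Our first approach will be a scaled-tubular-neighborhood argument. On compact pieces away from a fixed small neighborhood of the singularities, we use the hyperbolicity of the rescaled linear Poincaré flow together with Liao's shadowing/fake-foliation estimates. Near each singularity $\sigma$, which is Lorenz-like with $W^{ss}(\sigma)\cap C=\{\sigma\}$, we use the fact that orbits enter and leave through cross-sections transverse to $W^{ss}$. Passages through these cross-sections can be coded finitely, and the time spent near $\sigma$ grows only logarithmically in the distance. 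Points in the itinerary are therefore counted as ``visit to $U_\sigma$'' symbols, and the number of possible passage durations up to time $t$ stays exponential. If this direct approach fails, the fallback is to invoke the finite expansiveness (N-expansivity) of sectional-hyperbolic sets and apply \cite{lms} directly on each class. This step is also where the robustness (openness) of the structure must be verified, namely that the constants $\epsilon,T_0$ survive small $C^1$ perturbations.
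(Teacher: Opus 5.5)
There is a genuine gap in your counting step. The separation that Wen--Wen's rescaled expansiveness gives is at scale $\epsilon\|X(\varphi_s(x))\|$, not at a fixed scale $\epsilon$. After matching periods, two distinct periodic orbits are only guaranteed to satisfy $d(\varphi_s(x),\varphi_s(y))\geq\epsilon\|X(\varphi_s(x))\|$ at some time $s$, and this threshold becomes arbitrarily small when the orbit passes near a singularity.

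Your itinerary coding uses a fixed alphabet of balls of radius $\epsilon/2$. It therefore needs fixed-scale separation, which you do not prove and which does not follow from rescaled expansiveness. Two distinct orbits that shadow each other closely past a singularity may produce the same word. Then the map from orbits to words is not injective, and the bound $\mathrm{poly}(t)L^{t/\delta}$ does not follow. Your sketch near $\sigma$ (cross-sections, logarithmic passage times, ``visit to $U_\sigma$'' symbols, even with durations recorded) does not close this. Recording coarse passages is exactly what fails to tell such orbits apart, and you give no argument that orbits with the same coarse itinerary coincide.

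Your fallback via $N$-expansiveness and \cite{lms} would cover at most classes that are sectional-hyperbolic for $X$ or $-X$. The Bonatti--da Luz open and dense set produces multisingular hyperbolic classes, and in general these are neither: da Luz's robust examples in dimension $\geq 5$ have singularities of different indices in one class. So your description of the structure theorem is also too optimistic.

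The idea you are missing is that no local analysis at the singularities is needed at all. The flow speed along an orbit decays at most exponentially: $e^{-Ls}\leq\|X(\varphi_s(z))\|/\|X(z)\|\leq e^{Ls}$ for a Lipschitz constant $L$ of $X$. The paper counts with rescaled Bowen balls $B(x,t,\epsilon)$. By this estimate, $B(x,t,\epsilon)$ contains a round ball of radius comparable to $\epsilon e^{-2Lt}\|X(x)\|$. Hence on a compact set $K$ away from the singularities, a grid of mesh of order $e^{-2Lt}$ is rescaled $(t,\epsilon)$-spanning. It follows that rescaled $(t,\epsilon)$-separated subsets of $K$ have at most $C(\epsilon,K)e^{2dLt}$ points (Lemmas \ref{symmetry}, \ref{perseguido}, \ref{rasca}).

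Hyperbolic singularities are dynamically isolated, so every periodic orbit in $\Lambda$ meets $\{x\in\Lambda:\|X(x)\|\geq\delta\}$ for a fixed $\delta>0$, and you can pick one representative per orbit in that compact set. Rescaled expansiveness, in the discrete form of Theorem 1.1(v) of \cite{wy}, combined with the Bowen--Walters period-matching sequences $t_i=pa+q\alpha$, $u_i=pb+q\alpha$, then shows the following. The representatives of orbits with periods in $[t-\alpha/2,t+\alpha/2]$ form a rescaled $(t,\alpha)$-separated set. Summing over $O(t/\alpha)$ windows gives $N_t(X,\Lambda)\leq (t/\alpha)\,C e^{2dLt}$.

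Finally, you do not need your constants to survive $C^1$ perturbations. The constant $c=c(X)$ is allowed to depend on $X$, and openness of $\mathcal{O}$ comes directly from Theorem 3 of \cite{bdl}.
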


In analogy with Theorem 2 in \cite{m} (or \cite{s}), where the convergence of the zeta function for vector fields with hyperbolic nonwandering sets was established, the above theorem guarantees the convergence of the zeta function on a $C^1$ open and dense subset of star flows on any closed manifold.
In Section \ref{sec2}, we prove the theorem. In the final remarks (Section \ref{sec2}) we explain how to apply our methods to the sectional-hyperbolic attractors and the Lorenz equation.

\section{Proof of the theorem}
\label{sec2}

\noindent
The proof is based on our unpublished manuscript \cite{mrw}.

Hereafter we fix a closed manifold $M$, a $C^1$ vector field $X$ of $M$ and a compact invariant set $\Lambda$ of $X$.
This means that
$\varphi_t(\Lambda)=\Lambda$ for all $t\in\mathbb{R}$.
We say that $x\in M$ is a {\em singularity} if $X(x)=0$.
Denote by $Sing(X)$ the set of singularities and
$\Lambda^*=\Lambda\setminus Sing(X)$.
We further assume $\Lambda^*\neq\emptyset$.

Denote by $\|\cdot\|$ and $d$ the norm and the distance of the tangent bundle $TM$ and $M$ generated by the Riemannian metric respectively.

Define
\[
d_t(x,y)
\;=\;
\sup_{0 \le s \le t} \frac{d\bigl(\varphi_s(x),\,\varphi_s(y)\bigr)}{\|X(\varphi_s(x))\|}
\quad\quad (t \ge 0,\, x\in M^*,\, y\in M)
\]
and
\[
B(x,t,\epsilon)
\;=\;
\{\,y \in M : d_t(x,y) < \epsilon\}
\quad\quad(\forall\, x \in M^*).
\]

We have the following

\begin{lem}
\label{symmetry}
There is $r_0>0$ such that if $t\geq0$, $0<\epsilon<r_0$, $x\in M^*$ and
$$
y\in B(x,t,\frac{\epsilon}4)\quad\Longrightarrow\quad
B(x,t,\frac{\epsilon}4)\subset B(y,t,\epsilon).
$$
\end{lem}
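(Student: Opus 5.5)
The plan is a triangle-inequality argument, combined with a Lipschitz bound on the speed function $p\mapsto\|X(p)\|$. That bound lets us compare the normalizing denominators $\|X(\varphi_s(x))\|$ and $\|X(\varphi_s(y))\|$ along the two orbits.

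First, since $X$ is $C^1$ and $M$ is compact, the function $p\mapsto \|X(p)\|$ is Lipschitz on $M$. Indeed, it is locally Lipschitz in charts; a Lebesgue-number argument makes the constant uniform for nearby points, and for far-apart points we use the boundedness $\|X\|\le \sup_M\|X\|$. So there is $L>0$ with
\[
\bigl|\,\|X(p)\|-\|X(q)\|\,\bigr|\le L\, d(p,q)\qquad \forall p,q\in M .
\]
We set $r_0=2/L$. (Any $r_0\le 2/L$ works; we could shrink it further if a later step required.)

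Next, fix $t\ge0$, $0<\epsilon<r_0$, $x\in M^*$ and $y\in B(x,t,\epsilon/4)$. For every $0\le s\le t$ we have $d(\varphi_s(x),\varphi_s(y))<\frac{\epsilon}{4}\|X(\varphi_s(x))\|$. The Lipschitz bound then gives
\[
\|X(\varphi_s(y))\|\ \ge\ \Bigl(1-\tfrac{L\epsilon}{4}\Bigr)\|X(\varphi_s(x))\| \ >\ \tfrac12\|X(\varphi_s(x))\|>0 ,
\]
because $L\epsilon/4<1/2$. In particular $y\in M^*$, so $d_t(y,\cdot)$ and $B(y,t,\epsilon)$ are defined. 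This point is the one subtlety: the statement implicitly requires $y$ to be regular, and this is exactly where the smallness of $r_0$ is used.

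Finally, let $z\in B(x,t,\epsilon/4)$. For $0\le s\le t$ the triangle inequality gives
\[
d(\varphi_s(y),\varphi_s(z))\le d(\varphi_s(x),\varphi_s(y))+d(\varphi_s(x),\varphi_s(z)) <\tfrac{\epsilon}{2}\|X(\varphi_s(x))\|.
\]
Dividing by $\|X(\varphi_s(y))\|$ and using the previous display, we get
\[
\frac{d(\varphi_s(y),\varphi_s(z))}{\|X(\varphi_s(y))\|}\le \frac{\epsilon/2}{1-L\epsilon/4}.
\]
This bound is uniform in $s$, and the right-hand side is strictly less than $\epsilon$ precisely when $1-L\epsilon/4>1/2$, i.e. $\epsilon<2/L=r_0$. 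Taking the supremum over $s\in[0,t]$ yields $d_t(y,z)\le \frac{\epsilon/2}{1-L\epsilon/4}<\epsilon$. Hence $z\in B(y,t,\epsilon)$, which proves the inclusion.

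The only step that needs care is the first one: justifying a global Lipschitz constant for $\|X\|$ on the Riemannian manifold. It then has to be applied without any smallness assumption on $d(\varphi_s(x),\varphi_s(y))$, although such smallness holds anyway because $\epsilon\|X\|$ is small. The remaining steps are routine estimates.
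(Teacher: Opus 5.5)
Your proof is correct and follows the paper's argument. The paper likewise establishes $\|X(\varphi_s(x))\|\le 2\|X(\varphi_s(y))\|$, which also gives $y\in M^*$, and then bounds both triangle-inequality terms by $\frac{\epsilon}{2}\|X(\varphi_s(y))\|$; the only difference is that it gets this factor-$2$ speed comparison by citing Lemma 2.1 of \cite{wy}, whereas you derive it directly from a global Lipschitz constant $L$ for $\|X\|$ with $r_0=2/L$.
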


\begin{proof}
By Lemma 2.1 in \cite{wy} there is $r_0>0$ such that
if $a\in M^*$ and $b\in M$ then
\begin{equation}
\label{metanfeta}
d(a,b)\leq r_0\|X(a)\|\quad\implies\quad \frac{1}2\|X(a)\|\leq \|X(b)\|\leq 2\|X(a)\|.
\end{equation}

Take $t\geq0$, $0<\epsilon<r_0$, $x\in M^*$ and
$y\in B(x,t,\frac{\epsilon}4)$.
Then, $\|X(y)\|\geq \frac{1}2\|X(x)\|>0$ and so $y\in M^*$.
Now, take $t\geq0$, $0<\epsilon<r_0$ and $w\in B(x,t,\frac{\epsilon}4)$.
Since $y\in B(x,t,\frac{\epsilon}4)$, $\|X(\varphi_s(x))\|\leq 2\|X(\varphi_s(y))\|$ for all $0\leq s\leq t$ by \eqref{metanfeta} thus
$$
d(\varphi_s(y),\varphi_s(x))<\frac{\epsilon}2\|X(\varphi_s(y))\|,\quad\quad\forall 0\leq s\leq t.
$$
On the other hand, $w\in B(x,t,\frac{\epsilon}4)$ so $d(\varphi_s(x),\varphi_s(w))<\frac{\epsilon}4\|X(\varphi_s(x))\|$ for all $0\leq s\leq t$ thus
$$
d(\varphi_s(x),\varphi_s(w))<\frac{\epsilon}2\|X(\varphi_s(y))\|,\quad\quad\forall 0\leq s\leq t.
$$
Then,
\begin{eqnarray*}
d(\varphi_s(y),\varphi_s(w))&\leq& d(\varphi_s(y),\varphi_s(x))+d(\varphi_s(x),\varphi_s(w))\\
&<&\frac{\epsilon}2\|X(\varphi_s(y))\|+\frac{\epsilon}2\|X(\varphi_s(y))\|\\
&=&\epsilon\|X(\varphi_s(y))\|,\quad\quad\forall 0\leq s\leq t,
\end{eqnarray*}
proving $w\in B(y,t,\epsilon)$. Therefore, $B(x,t,\frac{\epsilon}4)\subset
B(y,t,\epsilon)$ and we are done.
\end{proof}

Given $\epsilon>0$ write $\Lambda_\epsilon=\{x\in \Lambda:\|X(x)\|\geq\delta\}$.
Call \(F \subset M\) {\em \((t,\epsilon,\Lambda)\)-spanning set} if \(F \subset M^*\) and
\[
\Lambda_\epsilon \;\subset\; B(F,t,\epsilon)
\quad\text{where}\quad
B(F,t,\epsilon)\;=\;\bigcup_{x \in F} B(x,t,\epsilon).
\]
Clearly \(\Lambda_\epsilon\) is compact and \(B(x,t,\epsilon)\) is an open neighborhood of \(x\) for all \(x \in M^*\). So, the number
\[
R(t,\epsilon,\Lambda)
\;=\;
\min \,\bigl\{\,
\mathrm{card}(F)
:\,F \text{ is a  \((t,\epsilon,\Lambda)\)-spanning set}
\bigr\}
\]
is finite for all \(t \ge 0\) and \(\epsilon > 0\), where \(\mathrm{card}(F)\) denotes the cardinality of \(F\).

Following \cite{r} we define
\[
E(X,\Lambda)
\;=\;
\lim_{\epsilon \to 0}\,\limsup_{t \to \infty}
\frac{1}{t} \log R(t,\epsilon,\Lambda).
\]
Equivalently,
\begin{equation}
\label{fria}
E(X,\Lambda)=\sup_{K}E(X,K),
\end{equation}
where the supremum is over the compact subsets $K\subset \Lambda^*$. This number is called {\em rescaled topological entropy} in \cite{mrw}. We shall use some properties of this number.

\begin{lem}
\label{perseguido}
 $E(X,\Lambda)<\infty$.
\end{lem}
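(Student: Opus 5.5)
We prove that $E(X,\Lambda)$ is bounded by a constant depending only on the $C^1$ size of $X$ and the dimension of $M$. The idea is a volume-counting argument for rescaled Bowen balls, in the spirit of the classical proof that topological entropy of a $C^1$ flow is at most $\dim M\cdot\sup\|DX\|$. By \eqref{fria} it suffices to bound $E(X,K)$ uniformly over compact $K\subset\Lambda^*$, and in fact to bound $\limsup_t \frac1t\log R(t,\epsilon,\Lambda)$ uniformly for each small $\epsilon$. (We read $\Lambda_\epsilon$ as $\{x\in\Lambda:\|X(x)\|\ge\epsilon\}$, so the $\delta$ in its definition should be $\epsilon$.)

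\textbf{Step 1: a local Lipschitz estimate.} Let $L=\sup_M\|DX\|$, computed in exponential charts, so that $X$ is $L$-Lipschitz with uniform constants on small balls. We claim there are $C>0$ and $\rho>0$, independent of $x\in M^*$, $s$ and $\epsilon<r_0$, with the following property. If $d(a,b)\le \rho\,\epsilon\, e^{-C s}\|X(a)\|$, then for all $0\le u\le s$
\[
d(\varphi_u a,\varphi_u b)\le \tfrac{\epsilon}{4}\|X(\varphi_u a)\|.
\]
The mechanism is that Gr\"onwall gives $d(\varphi_u a,\varphi_u b)\le e^{Lu}d(a,b)$. On the other hand, $\|X(\varphi_u a)\|\ge e^{-Lu}\|X(a)\|$, because $\frac{d}{du}X(\varphi_u a)=DX\cdot X(\varphi_u a)$. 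Taking $C=2L$ therefore works, and \eqref{metanfeta} keeps $b$ away from singularities along the way.

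\textbf{Step 2: covering by rescaled balls.} Fix $\epsilon$ and $t$, and take a maximal $(t,\epsilon/4)$-separated set in $\Lambda_\epsilon$ for the rescaled distance $d_t$. By Lemma \ref{symmetry}, this set gives a $(t,\epsilon,\Lambda)$-spanning set, and the balls $B(x,t,\epsilon/8)$ around its points are pairwise disjoint. This disjointness needs a symmetric variant of Lemma \ref{symmetry}, obtained the same way from \eqref{metanfeta}. Hence $R(t,\epsilon,\Lambda)\le \#$ of disjoint rescaled balls centred in $\Lambda_\epsilon$.

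\textbf{Step 3: volume bounds.} By Step 1, each $B(x,t,\epsilon/8)$ contains a Riemannian ball of radius $c\,\epsilon\, e^{-Ct}\|X(x)\|\ge c\,\epsilon^2 e^{-Ct}$, since $\|X(x)\|\ge\epsilon$ on $\Lambda_\epsilon$. Such a ball has volume at least $c'\epsilon^{2n}e^{-nCt}$, where $n=\dim M$. Disjointness then gives
\[
R(t,\epsilon,\Lambda)\le \mathrm{vol}(M)\,c'^{-1}\epsilon^{-2n}e^{nCt}.
\]
Consequently $\limsup_t\frac1t\log R(t,\epsilon,\Lambda)\le nC=2nL$ for every $\epsilon$, and hence $E(X,\Lambda)\le 2n\sup\|DX\|<\infty$.

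\textbf{Main obstacle.} The delicate point is Step 1, namely making the rescaled Bowen ball contain a genuine metric ball of radius exponentially small in $t$. The factor $1/\|X(\varphi_s x)\|$ can grow along the orbit as it approaches singularities, and the control must hold uniformly there. The lower bound $\|X(\varphi_u a)\|\ge e^{-Lu}\|X(a)\|$ handles this, since near singularities the speed decays at most exponentially. The Gr\"onwall estimate has to be done in charts, but compactness of $M$ makes the constants uniform.
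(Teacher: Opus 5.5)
Your proof is correct and gives the same bound $E(X,\Lambda)\le 2\dim M\cdot L$ as the paper, but it uses a packing argument where the paper uses a covering argument.

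The dynamical core is the same in both. Your Step 1 is exactly the chain $d(\varphi_s x,\varphi_s y)\le e^{Ls}d(x,y)\le e^{-Lt}\epsilon\|X(x)\|\le\epsilon\|X(\varphi_s x)\|$ that closes the paper's proof, combining Gr\"onwall with \eqref{cope}.

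\textbf{Paper's route.} The paper builds a spanning set explicitly. It covers a compact $K\subset\Lambda^*$ by finitely many charts on whose images $\|X\|\ge\rho$. It places a grid of mesh of order $\epsilon e^{-2Lt}$ in each chart and counts the grid points, roughly $e^{2dLt}$ of them. It then passes from $K$ to $\Lambda$ through \eqref{fria}, which it states without proof.

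\textbf{Your route.} You take a maximal separated set in $\Lambda_\epsilon$ and use Lemma \ref{symmetry} to make it spanning. You then bound its cardinality by the disjointness of small rescaled balls, together with a uniform lower volume bound for the metric balls of radius $\gtrsim\epsilon^2e^{-2Lt}$ that they contain.

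\textbf{What each buys.} Your argument bounds $R(t,\epsilon,\Lambda)$ directly from the definition through $\Lambda_\epsilon$, so it needs neither \eqref{fria} nor charts. As a by-product it shows that separated sets are finite, which is \eqref{veneno}. The paper's argument is more elementary: it needs no separated sets, no Lemma \ref{symmetry}, and no volume comparison.

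\textbf{One constant to fix.} Suppose $w\in B(x,t,\gamma)\cap B(x',t,\gamma)$. Then \eqref{metanfeta} gives only $\|X(\varphi_s x)\|\le 4\|X(\varphi_s x')\|$, and hence only $x\in B(x',t,5\gamma)$. So $\gamma=\epsilon/8$ does not contradict $(t,\epsilon/4)$-separation. Take $\gamma=\epsilon/20$ instead; this changes nothing in the exponential rate.
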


\begin{proof}
It suffices to prove
\begin{equation}
\label{pi}
E(X,\Lambda)\leq 2d L,
\end{equation}
where $L$  is a Lipschitz constant of $X$ and $d$ is the dimension of $M$.

Indeed, fix a Lipschitz constant $L$.
By Lemma 2.5 in \cite{hw} one has
\begin{equation}
\label{cope}
e^{-Ls}\leq \frac{\|X(\varphi_s(z))\|}{\|X(z)\|}\leq e^{sL},\quad\quad\forall z\in M^*,\, s\geq0.
\end{equation}
If $L=0$, the right-hand side of \eqref{pi} is zero
while $\|X(\varphi_s(z))\|=\|X(z)\|$ for all $z\in M^*$ and $s\geq0$ by \eqref{cope}.
Then, for every compact $K\subset \Lambda^*$ and $\epsilon>0$, every  $(0,\epsilon,K)$-spanning set is
 $(t,\epsilon,K)$-spanning ($\forall t\geq0$) so $E(X,K)=0$
for all compact $K\subset M^*$. Thus, $E(X)=0$ by \eqref{fria} and
\eqref{pi} holds.
Therefore, we can assume $L>0$.

Now, fix a compact subset $K\subset \Lambda^*$ and $\epsilon>0$.
Then, there are diffeomorphisms
$f_1,\cdots, f_n:B(0,2)\to M$ and a positive number $A$ such that
$$
K\subset \bigcup_{i=1}^nf_i(B(0,1))\subset \bigcup_{i=1}^nf_i(B(0,2))\subset M^*
\quad\mbox{ and }\quad
d(f_i(u),f_i(v))\leq A\|u-v\|,
$$
for all $u,v\in B(0,2)$ and $1\leq i\leq n$.
The third of the above inclusions implies that there is $\rho\in (0,\infty)$ such that
$$
\rho\leq \|X(x)\|,\quad\quad\forall x\in \bigcup_{i=1}^nf_i(B(0,2)).
$$

Now, for each $0<\delta\leq 2$ we let
$$
\mathcal{E}(\delta)=\{(\delta l_1,\cdots,\delta l_d)\in \mathbb{R}^d\mid l_i\in \mathbb{Z},\, |l_i\delta|<2\}.
$$
Then, $card(\mathcal{E}(\delta))\leq (\frac{5}\delta)^d$ and there is a constant $B>0$ (depending on the Euclidean metric of $\mathbb{R}^d$) such that for every $v\in B(0,1)$ there is $u\in E(\delta)$ satisfying $\|u-v\|\leq B\delta$. Replacing $B$ by $\frac{B}\rho$ we have
$\|u-v\|\leq B\rho\delta$ for all such $u,v$.

Since $L>0$, we can choose $T>0$ large such that
\begin{equation}
\label{pobre}
\frac{\epsilon}{e^{2Lt}AB}<2,\quad\quad\forall t\geq T.
\end{equation}
For all $t\geq T$ we define
$$
F=\bigcup_{i=1}^n f_i(E(\frac{\epsilon}{e^{2Lt}AB})).
$$
It follows that
\begin{eqnarray*}
card(F)&\leq & \sum_{i=1}^n card(f_i(E(\frac{\epsilon}{e^{2Lt}AB})))\\
&\overset{\eqref{pobre}}{\leq}& \left(\frac{5}{\left(\frac{\epsilon}{e^{2Lt}AB}\right)}\right)^dn\\
&=&
\left(\frac{5e^{2Lt}AB}{\epsilon}\right)^dn\\
&=& \left[\left(\frac{5AB}{\epsilon}\right)^dn\right]e^{2dLt},\quad\quad\forall t\geq T.
\end{eqnarray*}

On the other hand, given $y\in K$ one has $y=f_i(v)$ for some $v\in B(0,1)$ and $1\leq i\leq n$.
For this $v$ there is $u\in \mathcal{E}(\frac{\epsilon}{e^{2Lt}AB})$ such that
$$
\|u-v\|\leq B\rho\frac{\epsilon}{e^{2Lt}AB}=\frac{\rho\epsilon}{e^{2Lt}A}.
$$
Thus, $x=f_i(u)$ belongs to $F$ and satisfies
$$
d(x,y)=d(f_i(u),f_i(v))\leq A\|u-v\|\leq A\rho\frac{\epsilon}{e^{2Lt}A}\leq \frac{\epsilon}{e^{2Lt}}\|X(x)\|.
$$
So, for all $t\geq T$ one has
$$
d(\varphi_s(x),\varphi_s(y))\leq e^{sL}d(x,y)\leq e^{tL}\frac{\epsilon}{e^{2Lt}}\|X(x)\|
\overset{\eqref{cope}}{\leq} \epsilon\|X(\varphi_s(x))\|,\quad\quad\forall 0\leq s\leq t.
$$
We conclude that $F$ is a  $(t,\epsilon,K)$-spanning set for all $t\geq T$ so
$$
R(t,\epsilon,K)\leq card(F)\leq  \left[\left(\frac{5AB}{\epsilon}\right)^dn\right]e^{2dLt},\quad\quad\forall t\geq T.
$$
Then,
$$
\limsup_{t\to\infty}\frac{1}t\log R(t,\epsilon,K)\leq 2dL.
$$
Letting $\epsilon\to0$ we get $E(X,K)\leq 2dL$
and taking
the supremum over the compact subsets $K\subset \Lambda^*$ we get \eqref{pi}
from \eqref{fria}. This completes the proof.
\end{proof}

We can also compute $E(X,\Lambda)$ through separating sets:
Given a compact $K\subset \Lambda^*$, $t\geq0$ and $\epsilon>0$ we say that $E$ is a {\em  $(t,\epsilon,K)$-separating set} if
$E\subset K$ and $B(x,t,\epsilon)\cap E=\{x\}$ for all $x\in E$.
Define
$$
S(t,\epsilon,K)=\max\{card(E)\mid E\mbox{ is a  $(t,\epsilon,K)$-separating set}\},
$$
for all $t\geq0,\, \epsilon>0.$

\begin{lem}
\label{rasca}
One has
$$
E(X,\Lambda)=\sup_K\lim_{\epsilon\to0}\limsup_{t\to\infty}\frac{1}t\log S(t,\epsilon,K),
$$
where the supremum is over the compact subsets $K\subset \Lambda^*$.
\end{lem}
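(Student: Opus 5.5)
Via \eqref{fria}, it suffices to show that for each compact $K\subset\Lambda^*$
\[
\lim_{\epsilon\to0}\limsup_{t\to\infty}\frac1t\log R(t,\epsilon,K)=\lim_{\epsilon\to0}\limsup_{t\to\infty}\frac1t\log S(t,\epsilon,K),
\]
and then take the supremum over $K$. (Here $R(t,\epsilon,K)$ is understood with $K$ in place of $\Lambda_\epsilon$, as used in the proof of Lemma \ref{perseguido}.) I will compare the two quantities at comparable scales, in the spirit of the classical Bowen argument. The only difference from the classical case is that $d_t$ is not symmetric, so Lemma \ref{symmetry} replaces the triangle inequality. Throughout, fix $0<\epsilon<r_0$.

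\textbf{Step 1: $R(t,\epsilon,K)\le S(t,\epsilon/4,K)$.} Take a maximal $(t,\epsilon/4,K)$-separating set $E$; it exists and is finite by the finiteness argument below. Let $y\in K\setminus E$. By maximality, $E\cup\{y\}$ is not separating. So either $y\in B(x,t,\epsilon/4)$ for some $x\in E$, or $x\in B(y,t,\epsilon/4)$ for some $x\in E$. In the first case $y\in B(x,t,\epsilon)$ trivially. In the second case Lemma \ref{symmetry}, applied with base point $y\in M^*$, gives $B(y,t,\epsilon/4)\subset B(x,t,\epsilon)$; since $y\in B(y,t,\epsilon/4)$, we get $y\in B(x,t,\epsilon)$. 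Hence $E\subset K\subset M^*$ is $(t,\epsilon,K)$-spanning, and so $R(t,\epsilon,K)\le\operatorname{card}E\le S(t,\epsilon/4,K)$.

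\textbf{Step 2: $S(t,\epsilon,K)\le R(t,\epsilon/4,K)$.} Let $E$ be $(t,\epsilon,K)$-separating and $F$ be $(t,\epsilon/4,K)$-spanning. For each $z\in E$ choose $\phi(z)\in F$ with $z\in B(\phi(z),t,\epsilon/4)$. Suppose $\phi(z)=\phi(z')=x$. Lemma \ref{symmetry} with $y=z$ gives $z'\in B(x,t,\epsilon/4)\subset B(z,t,\epsilon)$. Since $E$ is separating, $z'=z$. So $\phi$ is injective and $\operatorname{card}E\le\operatorname{card}F$. This also shows that separating sets have bounded cardinality, so the maximum defining $S$ is attained.

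\textbf{Conclusion.} Combining the two steps,
\[
\limsup_t\tfrac1t\log R(t,4\epsilon,K)\le \limsup_t\tfrac1t\log S(t,\epsilon,K)\le\limsup_t\tfrac1t\log R(t,\epsilon/4,K).
\]
Both quantities are monotone in $\epsilon$, since balls grow with $\epsilon$. Letting $\epsilon\to0$ therefore shows the two limits exist (possibly $+\infty$, though by Lemma \ref{perseguido} they are finite) and coincide.

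The main obstacle is the asymmetry of $d_t$ and getting the base points in Lemma \ref{symmetry} right. In Step 1 the lemma must be applied centered at the new point $y$, which is legitimate because $y\in K\subset M^*$. A secondary point is consistency of the spanning definition: for compact $K\subset\Lambda^*$ one should check that the equivalence \eqref{fria} indeed uses spanning sets covering $K$ itself.
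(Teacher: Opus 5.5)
Your proof is correct and uses the same two Bowen-type comparisons as the paper: a separating set injects into a $(t,\frac{\epsilon}{4},\cdot)$-spanning set, and a maximal $(t,\frac{\epsilon}{4},\cdot)$-separating set is $(t,\epsilon,\cdot)$-spanning, with Lemma \ref{symmetry} standing in for the missing symmetry of $d_t$ in both. The only difference is bookkeeping: you prove the $R$/$S$ equality for each fixed compact $K$ and then invoke \eqref{fria}, whereas the paper compares directly with $R(t,\epsilon,\Lambda)$, using $K\subset\Lambda_{\epsilon/4}$ for small $\epsilon$ in one direction and the $\epsilon$-dependent compact set $K_\epsilon=\Lambda_\epsilon$ in the other, which in effect inlines the easy proof of \eqref{fria}.
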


\begin{proof}
First we prove that for every compact $K\subset \Lambda^*$ there is
$\epsilon_K>0$ such that
$$
S(t,\epsilon,K)\leq R(t,\frac{\epsilon}4,\Lambda),\quad\quad\forall t>0,\, 0<\epsilon<\epsilon_K.
$$
Fix such a $K$.
Then, there is $0<\epsilon_K<r_0$ such that
$$
K\subset \Lambda_{\frac{\epsilon}4},\quad\quad\forall 0<\epsilon <\epsilon_K,
$$
where
$r_0$ is given by Lemma \ref{symmetry}.

Let $E$ and $F$ be a  $(t,\epsilon,K)$-separating set and a  $(t,\frac{\epsilon}4,K)$-spanning set respectively.
Then, $E\subset K\subset M_{\frac{\epsilon}4}$ and so there is a map $\phi:E\to F$ such that
$$
x\in B(\phi(x),t,\frac{\epsilon}4),\quad\quad\forall x\in E.
$$
Now, suppose $\phi(x)=\phi(x')$ for some $x,x'\in E$.
Then, the common value $z=\phi(x)=\phi(x')$ satisfies
$x,x'\in B(z,t,\frac{\epsilon}4)$. Since $\epsilon<r_0$,
$B(z,t,\frac{\epsilon}4)\subset B(x,t,\epsilon)$ by Lemma \ref{symmetry}
so $x'\in B(x,t,\epsilon)\cap E=\{x\}$ hence $x'=x$.
It follows that $\phi$ is injective thus $card(E)\leq card(F)$. Since $E$ and $F$ are arbitrary, $S(t,\epsilon,K)\leq R(t,\frac{\epsilon}4,K)$. It follows that
\begin{equation}
\label{veneno}
S(t,\epsilon,K)<\infty,\quad\forall t\geq0,\, 0<\epsilon< r_0\mbox{ and all compact }K\subset M^*.
\end{equation}
Moreover,
$$
\limsup_{\epsilon\to0}\limsup_{t\to\infty}\frac{1}t\log S(t,\epsilon,K)\leq \lim_{\epsilon\to0}\limsup_{t\to\infty}\frac{1}t\log R(t,\frac{\epsilon}4,K)=E(X).
$$
Since $K\subset M^*$ is arbitrary,
$$
\sup_K\limsup_{\epsilon\to0}\limsup_{t\to\infty}\frac{1}t\log S(t,\epsilon,K)\leq E(X,\Lambda).
$$

To prove the reversed inequality we first prove that there is $\epsilon_0>0$ such that
for every $0<\epsilon<\epsilon_0$ there is $K_\epsilon\subset M^*$ compact such that
\begin{equation}
\label{metro}
 R(t,\epsilon,K)\leq S(t,\frac{\epsilon}4,K_\epsilon,K),\quad\quad\forall t>0.
\end{equation}
Indeed, just take $\epsilon_0=r_0$ from Lemma \ref{symmetry} and $K_\epsilon=\Lambda_\epsilon$ for the given $0<\epsilon<\epsilon_0$.
Let $E$ be a $(t,\frac{\epsilon}4,K_\epsilon)$-separating set of maximal cardinality (which is finite by \eqref{veneno}). We shall prove that $E$ is  $(t,\epsilon)$-spanning i.e.
\begin{equation}
\label{rumble}
y\in B(E,t,\epsilon),
\end{equation}
for all $y\in K_\epsilon$.

Take $y\in K_\epsilon$.
If $y\in E$ so \eqref{rumble} holds hence we can assume
$y\notin E$. It follows that $card(E\cup \{y\})=card(E)+1>card(E)$.
Since $E\cup \{y\}\subset K_\epsilon$,
we conclude that $E\cup \{y\}$ is not  $(t,\frac{\epsilon}4,K_\epsilon)$-separating. Thus, there is $z\in E\cup \{y\}$ such that
$$
(B(z,t,\frac{\epsilon}4)\cap E)\cup (B(z,t,\frac{\epsilon}4)\cap \{y\})\neq \{z\}.
$$

If $z\in E$, then $B(z,t,\frac{\epsilon}4)\cap E=\{z\}$ so $B(z,t,\frac{\epsilon}4)\cap \{y\}\neq\emptyset$ thus
$y\in B(z,t,\frac{\epsilon}4)$ hence \eqref{rumble} holds.

If $z\notin E$, then $z=y$ so $B(z,t,\frac{\epsilon}4)\cap \{y\}=\{z\}$ thus
$B(y,t,\frac{\epsilon}4)\cap E\neq\emptyset$ yielding
$x\in B(y,t,\frac{\epsilon}4)$ for some $x\in E$.
Since $\epsilon<r_0$, $B(y,t,\frac{\epsilon}4)\subset B(x,t,\epsilon)$
Lemma \ref{symmetry} and since $y\in B(y,t,\frac{\epsilon}4)$ we get
$y\in B(x,t,\epsilon)$  thus \eqref{rumble} holds.
Then, $E$ is rescaling $(t,\epsilon)$-spanning proving \eqref{metro}.

So,
\begin{eqnarray*}
\limsup_{t\to\infty}\frac{1}t\log R(t,\epsilon)&\leq& \limsup_{t\to\infty}\frac{1}t\log S(t,\frac{\epsilon}4,K_\epsilon)\\
&\leq& \lim_{\gamma\to0}\limsup_{t\to\infty}\frac{1}t\log S(t,\gamma,K_\epsilon)\\
&\leq & \sup_K\lim_{\gamma\to0}\limsup_{t\to\infty}\frac{1}t\log S(t,\gamma,K)
\end{eqnarray*}
Letting $\epsilon\to0$ above we get
$$
E(X,\Lambda)\leq \sup_K\lim_{\gamma\to0}\limsup_{t\to\infty}\frac{1}t\log S(t,\gamma,K)
$$
completing the proof.
\end{proof}

We say that a singularity $\sigma$ of $X$ is {\em dynamically isolated} if there is a neighborhood $U$ of $\sigma$ such that
$$
\bigcap_{t\in\mathbb{R}}\varphi_t(U)=\{\sigma\}.
$$
On the other hand, $\Lambda$ is {\em rescaling expansive} (or $X$ is rescaling expansive on $\Lambda$ c.f.\cite{wew}) if for every $\epsilon>0$ there is $\delta>0$ such that if $x,y\in \Lambda$ satisfy
$d(\varphi_s(x),\varphi_{h(s)}(y))\leq\delta\|X(\varphi_s(x))\|$ for all $s\in\mathbb{R}$ and some increasing homeomorphism $h:\mathbb{R}\to\mathbb{R}$, then
$\varphi_{h(0)}(y)=\varphi_{s^*}(x)$ for some $s^*\in [-\epsilon,\epsilon]$.

Denote by $N_t(X,\Lambda)$ the number of periodic orbits of period $\leq t$ contained in $\Lambda$.

\begin{lem}
\label{furo}
If $\Lambda$ rescaling expansive and every singularity in $\Lambda$ is dynamically isolated, then
$$
\limsup_{t\to\infty}\frac{1}t\log N_t(X,\Lambda)<\infty.
$$
\end{lem}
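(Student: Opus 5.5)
The plan is to bound $N_t(X,\Lambda)$ by a separating-set count and then apply Lemma \ref{rasca} together with Lemma \ref{perseguido}. The one place this needs care is that periodic orbits can pass arbitrarily close to singularities, where no single compact $K\subset\Lambda^*$ controls them.

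\emph{Step 1: keep periodic orbits away from singularities.} We use that every singularity $\sigma\in\Lambda$ is dynamically isolated: there is a neighborhood $U_\sigma$ with $\bigcap_t\varphi_t(U_\sigma)=\{\sigma\}$. A periodic orbit is a compact invariant set different from $\{\sigma\}$, so it cannot be contained in $U_\sigma$. Since $Sing(X)\cap\Lambda$ is compact, finitely many such neighborhoods cover it. Shrinking them, we can assume they are pairwise disjoint with $U=\bigcup U_\sigma$. If a periodic orbit $\gamma\subset\Lambda$ has a point outside $U$, we pick such a point $x_\gamma$. Then $x_\gamma\in K:=\Lambda\setminus U$, which is a compact subset of $\Lambda^*$. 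The delicate case is an orbit that lies in $U$ without lying in a single $U_\sigma$; this cannot happen, since an orbit is connected and the $U_\sigma$ are disjoint open sets. So every periodic orbit in $\Lambda$ meets the fixed compact set $K\subset\Lambda^*$.

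\emph{Step 2: distinct orbits of comparable period are separated.} Fix $\epsilon>0$ and let $\delta$ be the constant from rescaling expansiveness. For periodic orbits of period in $[n,n+1]$, we choose points $x_\gamma\in K$. Within one orbit $\gamma$ we may also take several points spaced along $\gamma$ by times in $[\epsilon',2\epsilon']$, but it is enough to count orbits, so we take one point each. We want the set $\{x_\gamma\}$ to be $(T,c\delta,K)$-separating for $T\approx 2(n+1)$ and a suitable constant $c$ (one could also use $T$ a common multiple, but that is unnecessary). Suppose instead that $x_{\gamma'}\in B(x_\gamma,T,c\delta)$. Then $\varphi_s(x_{\gamma'})$ stays rescaled-$c\delta$-close to $\varphi_s(x_\gamma)$ for $s\in[0,T]$. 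By periodicity, the shadowing should extend to all $s\in\mathbb{R}$ after a reparametrization $h$. The flaw is that the periods $\tau,\tau'$ differ, so the orbits drift apart over many turns. The fix is to do what Bowen--Walters do for expansive flows: build $h$ piecewise, and use flow boxes together with the comparability \eqref{metanfeta} to re-synchronize once per period. Rescaling expansiveness then gives $x_{\gamma'}\in\varphi_{[-\epsilon,\epsilon]}(x_\gamma)$, so $\gamma=\gamma'$. I expect this step to be the main obstacle: constructing a global $h$ from closeness on $[0,T]$ when the periods are unequal, uniformly near singularities. I would first try a Lipschitz-in-time matching with a cross-section argument, in which closeness of returns to local cross sections of size proportional to $\|X\|$ gives $|\tau-\tau'|$ small.

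\emph{Step 3: conclude.} Granting Step 2, the number of periodic orbits with period in $[n,n+1]$ is at most $S(2n+2,c\delta,K)$. Lemma \ref{rasca} and Lemma \ref{perseguido} give $\limsup_T\frac1T\log S(T,c\delta,K)\le E(X,\Lambda)<\infty$; to use this, we fix $\epsilon$ and $\delta$ small enough that the limit in $\epsilon$ is approximated, and the monotonicity of $S$ in $\epsilon$ makes this harmless. Summing over $n\le t$ gives
\[
N_t(X,\Lambda)\le\sum_{n\le t}S(2n+2,c\delta,K)\le C e^{(2E(X,\Lambda)+1)(t+2)}.
\]
Hence $\limsup_t\frac1t\log N_t(X,\Lambda)\le 2E(X,\Lambda)+1<\infty$.
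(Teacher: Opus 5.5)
Your Steps 1 and 3 are fine. In Step 1, each $U_\sigma$ contains no other singularity, so $Sing(X)\cap\Lambda$ is finite, which is what allows the disjoint shrinking. But Step 2 is the whole content of the lemma, and you do not prove it: you sketch a plan (piecewise $h$, flow boxes, cross sections) and flag it as the main obstacle. As you set it up, it also would not go through directly. With periods $\tau,\tau'$ only known to lie in $[n,n+1]$, the offset $|\tau-\tau'|$ can be of order $1$. Flowing a point for a time of order $1$ moves it by up to about $e^{L}\|X\|$, not by $\delta\|X\|$, so a reparametrization matching the orbits period by period cannot be controlled at scale $\delta$. Your remedy, that local cross sections force $|\tau-\tau'|$ to be small, is not justified either. $\varphi_\tau(y)$ being near $y$ does not place it on the local orbit segment of $y$, because the orbit of $y$ may return near $y$ along another strand. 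Ruling that out would itself need rescaling expansiveness.

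The missing idea is how to convert closeness on a finite time window into the global hypothesis of rescaling expansiveness. The paper uses period windows of length $\alpha$, where $\alpha$ comes from the discrete-time characterization of rescaled expansiveness (Theorem 1.1(v) in \cite{wy}). If $x,y$ have periods $a,b\in[t-\frac{\alpha}{2},t+\frac{\alpha}{2}]$ and $d_t(x,y)<\alpha$, the sequences $t_i=pa+q\alpha$, $u_i=pb+q\alpha$ ($i=pm+q$, $0\le q<m$) jump by exactly one period per block, so no continuous $h$ is ever built. With the definition as stated in the paper you can even avoid \cite{wy}. Use windows $[t-\beta,t+\beta]$ with $\beta$ small relative to $\delta$, and the linear map $h(s)=\frac{b}{a}s$. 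Then $s\mapsto d(\varphi_s(x),\varphi_{h(s)}(y))/\|X(\varphi_s(x))\|$ is $a$-periodic, so it suffices to check $s\in[0,a]$. There $|h(s)-s|\le|b-a|\le 2\beta$, and \eqref{cope} and \eqref{metanfeta} give, whenever $d_a(x,y)<\frac{\delta}{2}<r_0$,
\[
d(\varphi_s(x),\varphi_{h(s)}(y))\le d(\varphi_s(x),\varphi_s(y))+2\beta e^{2\beta L}\|X(\varphi_s(y))\|<\Bigl(\frac{\delta}{2}+4\beta e^{2\beta L}\Bigr)\|X(\varphi_s(x))\|\le\delta\|X(\varphi_s(x))\|.
\]
Rescaling expansiveness then puts $y$ on the orbit of $x$. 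So there is no drift, and the rescaling handles the singularities. With windows of length $2\beta$ your Step 3 goes through unchanged, at the cost of a factor of order $t/\beta$.
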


\begin{proof}
Since every singularity in $\Lambda$ is dynamically isolated,
there is $\delta>0$ such that every periodic orbit of $X$ in $\Lambda$ intersects $\Lambda_\delta$.
We now follow closely the proof of Theorem 5 in \cite{bw} with the aid of Theorem 1.1 in \cite{wy}.

Given $t,\beta>0$ let $v_\beta(t)$ denote the number of different periodic orbits in $\Lambda$ with periods belonging to the closed interval $[t-\beta,t+\beta]$.
Let $\alpha>0$ be given by Theorem 1.1-(v) in \cite{wy} for $\epsilon=1$.

We claim that
\begin{equation}
\label{poroto}
v_{\frac{\alpha}2}(t)\leq S(t,\alpha,\Lambda_\delta),\quad\quad\forall t>0.
\end{equation}

Indeed, fix $t>0$.
By selecting one point for each periodic orbit whose period belongs to the closed interval $[t-\frac{\alpha}2,t+\frac{\alpha}2]$ we form a subset $E\subset M^*$ such that $card(E)=v_{\frac{\alpha}2}(t)$.
Since every periodic orbit intersects $\Lambda_\delta$, we can further assume $E\subset \Lambda_\delta$.

Let us prove that $E$ is  $(t,\alpha,\Lambda_\delta)$-separating.
Otherwise, there would exist distinct $x,y\in E$ such that
$$
\frac{d(\varphi_s(x),\varphi_s(y))}{\|X(\varphi_s(x))\|}<\alpha,\quad\quad\forall 0\leq s\leq t.
$$
Let $a$ and $b$ be the periods of $x$ and $y$ respectively so
$a,b\in [t-\frac{\alpha}2,t+\frac{\alpha}2]$, $\varphi_a(x)=x$ and $\varphi_b(y)=y$.
Define $m=[\frac{t-\frac{\alpha}2}\alpha]$ and the sequences
$(t_i)_{i\in\mathbb{Z}},(u_i)_{i\in\mathbb{Z}}$ by
$$
t_i=pa+q\alpha\quad\mbox{ and }\quad u_i=pb+q\alpha
$$
whenever $i=pm+q$ for some $p\in \mathbb{Z}$ and $0\leq q<m.$
Since $0\leq q\alpha\leq t$ for $0\leq q<m$,
it follows that
$$
\frac{d(\varphi_{t_i}(x),\varphi_{u_i}(y))}{\|X(\varphi_{t_i}(x))\|}=\frac{d(\varphi_{q\alpha}(x),\varphi_{q\alpha}(y))}{\|X(\varphi_{q\alpha}(x))\|}<\alpha,\quad\quad\forall i\in\mathbb{Z}.
$$
Then, Theorem 1.1-(v) in \cite{wy} provides $t\in [-1,1]$ such that $\varphi_{u_0}(y)=\varphi_t(\varphi_{t_0}(x))$. It follows that
$x$ and $y$ belong to the same orbit. However, $x$ and $y$ are distinct and so they are in different orbits by construction, a contradiction. This contradiction proves that $E$ is $(t,\alpha,\Lambda_\delta)$-separating. It follows that
$card(E)\leq S(t,\alpha,\Lambda_\delta))$ and, since $card(E)=v_{\frac{\alpha}2}(t)$, we get \eqref{poroto}.

It then follows that
$$
N_t(X,\Lambda)\leq \sum_{n=1}^{[\frac{t}\alpha]}v_{\frac{\alpha}2}(n\alpha)\overset{\eqref{poroto}}{\leq} \sum_{n=1}^{[\frac{t}\alpha]} S(n\alpha,\alpha,\Lambda_\delta),\quad\quad\forall t\geq0.
$$
Now, $n\alpha\leq t$ whenever $n=1,2,\cdots, [\frac{t}\alpha]$
and $S(s,\alpha,\Lambda_\alpha)$ does not decreases as $s$ increases so
$S(n\alpha,\alpha,\Lambda_\delta)\leq S(t,\alpha,\Lambda_\delta)$ for all such $n$'s thus
$$
v(t,\Lambda)\leq \frac{t}\alpha S(t,\alpha,\Lambda_\delta),\quad\quad\forall t\geq0.
$$
Therefore,
\begin{eqnarray*}
\limsup_{t\to\infty}\frac{1}t\log N_t(X,\Lambda)&\leq& \limsup_{t\to\infty}\left(\frac{\log t}t-\frac{\log\alpha}t+\frac{1}t\log S(t,\alpha,\Lambda_\delta)\right)\\
&=&\limsup_{t\to\infty}\frac{1}t\log S(t,\alpha,\Lambda_\delta)\\
&\leq& E(X,\Lambda) \quad\quad\quad\quad\quad\quad\mbox{(by Lemma \ref{rasca})}\\
&<&\infty\quad\quad\quad\quad\quad\quad\quad\quad\quad\mbox{(by Lemma \ref{perseguido}).}
\end{eqnarray*}
\end{proof}

\begin{proof}[Proof of the theorem]
By Theorem 3 in \cite{bdl} there is an open and dense subset $\mathcal{O}$ of star flows of $M$ such that if $X\in \mathcal{O}$, then the set of periodic orbits of $X$ is contained in the union of finitely many pairwise disjoint multisingular hyperbolic sets $\Lambda$ of $X$.
It suffices to prove that $N_t(X,\Lambda)$ growths at most exponentially for all such sets $\Lambda$.
By Theorem A in \cite{wew} each $\Lambda$ is rescaling expansive. Since every singularity is hyperbolic, we also have that every singularity in $\Lambda$ is dynamically isolated.
Then,
$$
c_0=1+\limsup_{t\to\infty}\frac{1}t\log N_t(X,\Lambda)<\infty
$$
by Lemma \ref{furo}.
It follows from the definitions that
there is $T>0$ such that
$$
N_t(X,\Lambda)<c_0^t,\qquad\forall t\geq T.
$$
Then,
$$
c=\max\left\{c_0,\max_{0<t\leq T}\frac{1}t\log N_t(X,\Lambda)\right\}
$$
satisfies
$$
N_t(X,\Lambda)\leq c^t\qquad(\forall t>0)
$$
completing the proof.
\end{proof}

\section{Final Remarks}
\label{sec3}

\noindent
The methods used in this paper apply also in another situations.
For example, every sectional-hyperbolic attractor (in the sense of \cite{mm}) is rescaling expansive and the singularities on it are hyperbolic hence dynamically isolated. Then, we have at most exponential growth of periodic orbits for such attractors by Lemma \ref{furo}.

This abstract result applies to the concrete case of the Lorenz polynomial equation in $\mathbb{R}^3$ (c.f. \cite{l}):
\[
\begin{cases}
\dot{x} = \sigma(y-x),\\
\dot{y} = x(\rho - z) - y,\\
\dot{z} = xy - \beta z,
\end{cases}
\]
More precisely, {\em we assert that the growth rate of periodic orbits for the Lorenz equation with classical parameters $\sigma=10$, $\rho=28$, and $\beta=\frac{8}{3}$ is at most exponential}.

In fact, by \cite{t} the Lorenz equation with classical parameters has a compact invariant set $\Lambda$ containing all periodic orbits (this set is the geometric Lorenz attractor \cite{gw}).
Since this attractor is sectional-hyperbolic, we can apply the above abstract result to conclude the assertion.

Another proof of this assertion was provided to us by Professor D. Turaev:
the number of periodic orbits of period less than $T$ is not larger than the number of periodic orbits of the return map to the cross-section $z=r-1$ of period less than $T/\tau$, where $\tau$ is the minimal return time to the cross-section. Since the return map is hyperbolic (by Tucker's computer assisted proof \cite{t}), its periodic orbits are uniquely defined by their codings (sequences of $(-1)$'s and $1$'s corresponding to negative or positive values of  $x$, so the number of orbits of period  $n$  for this map is no more than  $2^n$ (the number of all possible codings).

\section*{Declaration of competing interest}
\label{Three}

\noindent
There is no competing interest.

\section*{Data availability}
\label{Four}

\noindent
No data was used for the research described in the article.

\section*{Funding}
XW was partially supported by NSFC12071018 and the Fundamental Research Funds for the Central Universities.
This research is part of a project that has received funding from
the European Union's European Research Council Marie Sklodowska-Curie Project No. 101151716 -- TMSHADS -- HORIZON--MSCA--2023--PF--01.


\begin{thebibliography}{10}








\bibitem{am}
Artin, M., Mazur, B.,
On periodic points,
{\em Ann. of Math (2)}, 81 (1965), 82--99.






\bibitem{bdl}
Bonatti, C., da Luz, A.,
Star flows and multisingular hyperbolicity,
{\em J. Eur. Math. Soc.} 23 (2021), 2649--2705.












\bibitem{bw}
Bowen, R., Walters, P.,
Expansive one-parameter flows.
{\em J. Differential Equations} 12 (1972), 180--193.








\bibitem{gw}
Gan, S., Wen, L.,
Nonsingular star flows satisfy Axiom A and the no-cycle condition,
{\em Invent. Math.} 164 (2006), 279--315.














\bibitem{hw}
Han, B., Wen, X.,
A shadowing lemma for quasi-hyperbolic strings of flows,
{\em J. Differential Equations} 264 (2018), 1--29.


\bibitem{ka}
Kaloshin, V.,
An extension of Artin-Mazur theorem,
{\em Ann. of Math. (2)} 150 (1999), 729--741.















\bibitem{lms}
Lee, K., Morales, C.A., San Martin, B.,
Measure $N$-expansive systems,
{\em J. Differential Equations} 267 (2019), 2053--2082.



\bibitem{lr}
Lee, K, Rojas, A.,
Eventually expansive semiflows,
{\em Comm. Pure App. Anal.} 21 (2022), 3335--3351.



\bibitem{l}
Lorenz, E.N.,
Deterministic Nonperiodic Flow,
{\em Journal of the Atmospheric Sciences} 20 (1963), 130--141.



\bibitem{mm}
Metzger, R., Morales, C.A.,
Sectional-hyperbolic systems,
{\em Ergodic Theory Dynam. Systems} 28 (2008), 1587--1597.


\bibitem{m}
Meyer, K.R.,
On the convergence of zeta function for flows and diffeomorphisms,
{\em J. Differential Equations} 5 (1969), 339--345.





\bibitem{mrw}
Morales, C.A., Rego, E., Wen, X.,
Rescaled topological entropy,
arXiv:2506.02383 [math.DS].











\bibitem{r}
Rego, E.,
{\em Entropy Theory of Expansive Systems}
Thesis, Universidade Federal do Rio de Janeiro (2021).






\bibitem{s}
Shub, M.,
Periodic orbits of hyperbolic diffeomorphisms and flows,
{\em Bull. Amer. Math. Soc.} 75 (1969), 57--58.




\bibitem{szz}
Sun, W., Zhang, C., Zhou, Y,
Extreme entropy versus growth rates of periodic orbits in equivalent flows,
{\em Topology App.} 202 (2016), 151--159.





\bibitem{t}
Tucker, W.,
A rigorous ODE solver and Smale's 14th problem,
{\em Found. Comput. Math.} 2 (2002), 53--117.







\bibitem{wew}
Wen, X., Wen, L.,
A rescaled expansiveness of flows,
{\em Trans. Amer. Math. Soc.} 371 (2019), 3179--3207.


\bibitem{wy}
Wen, X., Yu, Y.,
Equivalent definitions of rescaled expansiveness,
{\em J. Korean Math. Soc.} 55 (2018), 593--604.


\bibitem{wyz}
Wu, W., Yang, D., Zhang, Y.,
On the growth rate of periodic orbits for vector fields,
{\em Adv. Math.} 346 (2019), 170--193.



\end{thebibliography}
\end{document}